%\NeedsTeXFormat{LaTeX2e}

\documentclass[preprint,12pt]{amsart}
\textwidth=16.5cm
\oddsidemargin=0cm
\evensidemargin=0cm

\usepackage{tikz}
\usetikzlibrary{matrix}

\usepackage{amssymb}
\usepackage{verbatim}
\usepackage{array}
\usepackage{latexsym}
\usepackage{enumerate}
\usepackage{amsmath}
\usepackage{amsfonts}
\usepackage{amsthm}
\usepackage{color}
\usepackage[english]{babel}
\usepackage[pagewise]{lineno}
%\linenumbers
%\evensidemargin=0.1cm
%\oddsidemargin=0.1cm
%\textwidth=15cm

% page layout

\newtheorem{theorem}{Theorem}[section]
\newtheorem{proposition}[theorem]{Proposition}
\newtheorem{lemma}[theorem]{Lemma}
\newtheorem{corollary}[theorem]{Corollary}

{\theoremstyle{definition}
\newtheorem{definition}{Definition}}
\newtheorem{rem}[theorem]{Remark}
%\newtheorem*{proposition*}{Proposition}
%\newtheorem*{corollary*}{Corollary}
%\newtheorem*{lemma*}{Lemma}

% definitions

\def\S{\mathbf S}

\def\cX{\mathcal X}
\def\cY{\mathcal Y}

\def\Aut{\mbox{\rm Aut}}

\def\PG{{\rm{PG}}}

\def\Aut{\mbox{\rm Aut}}

\def\Ker{\mbox{\rm Ker}}

%\def\ominus{{{[2n-1]}}}
% Groups

\newcommand{\PGL}{\mbox{\rm PGL}}

\newcommand{\aut}{\mbox{\rm Aut}}

%\newcommand{\Aut}{{\mbox{\rm Aut}_{K}(\gS)}}

%\newcommand{\T}{{\rm T}}
%\newcommand{\N}{{\rm N}}

%Greek lower case letters

%Greek capital letters

%Fractions in textstyle

%Capital bold letters

%\renewcommand{\baselinestretch}{1.5}

%\journal{}

\title{Large automorphism groups of ordinary curves in characteristic $2$}
\date{}

\author{Maria Montanucci}
\address{Dipartimento di Matematica, Informatica ed Economia\\ Universit\`a degli Studi  della Basilicata\\ Contrada Macchia Romana\\ 85100 Potenza (Italy)}
\email[MMontanucci]{maria.montanucci@unibas.it}

\author[PSpeziali]{Pietro Speziali}
\address[PSpeziali]{Instituto de Ci\^encias Matem\'aticas e de Computa\c{c}\~ao,\\ Universidade de S\~ao Paulo,\\S\~ao Carlos, SP 13560-970, (Brazil)}
\email[PSpeziali]{pietro.speziali@unibas.it}

\begin{document}
\maketitle

\begin{abstract}
Let $\cX$ be a (projective, non-singular,  irreducible) curve of even genus $g(\cX) \geq 2$ defined over an algebraically closed field $K$ of characteristic $p$. If the $p$-rank $\gamma(\cX)$ equals $g(\cX)$, then $\cX$ is \emph{ordinary}. In this paper, we deal with \emph{large} automorphism groups $G$ of ordinary curves. 
Under the hypotheses that  $p = 2$, $g(\cX)$ is even and $G$ is solvable, we prove that $|G| < 35(g(\cX) +1)^{3/2}$.
\end{abstract}

Keywords: Algebraic curves, Automorphism groups, $p$-rank

MSC(2010):  14H37, 14H05
%% MSC codes here, in the form: \MSC code \sep code
%% or \MSC[2008] code \sep code (2000 is the default)

\section{Introduction}\label{intr}
Let $p >0$ be a prime. By a \emph{curve} $\cX$, we mean a  projective, non-singular, irreducible curve defined over an algebraically closed field $K$ of characteristic $p$. Usually, the study of the geometry of $\cX$ is carried out through the study of its invariants, such as its \emph{genus} $g(\cX)$, its $p$-\emph{rank} (or \emph{Hasse-Witt invariant}) $\gamma(\cX)$, and its \emph{automorphism group} $\aut(\cX)$. The genus is the dimension of the $K$-vector space of holomorphic differentials on $\cX$, while the $p$-rank is the dimension of the $K$-vector space  of holomorphic logarithmic differentials. A curve $\cX$ is \emph{ordinary} if $g(\cX) = \gamma(\cX)$. An automorphism  of $\cX$ arises by defining a field automorphism $\sigma$ of the function field $K(\cX)$ fixing the ground field $K$ elementwise.  From a purely geometric point of view, $\cX$ admits a non-singular model in some projective space $\PG(r,K)$ for some $r\leq g(\cX)$, and every automorphism of $\cX$ can be represented by a linear collineation in $\PGL(r+1,K)$ leaving $\cX$ invariant. 
 
 Henceforth, we shall denote the automorphism group of $\cX$ by $\aut(\cX)$.  By a classical result, $\aut(\cX)$ is finite whenever $g(\cX) \geq 2$.  Also, if the characteristic $p$ of $K$ divides $|\aut(\cX)|$, then the classical Hurwitz bound 
 $$
 |\aut(\cX)| \leq 84(g(\cX)-1)
 $$
 no longer holds in general. A major achievement here, due to Henn \cite{henn}, is the classification of all the curves having an automorphism group whose size exceeds $8g(\cX)^3$. 
On the one hand, as all such curves have zero $p$-rank, then an interesting problem is to find a (minimal) function $f(g(\cX))$ depending on the genus $g(\cX)$ such that all curves $\cX$ with $|\aut(\cX)| > f(g(\cX))$ have zero $p$-rank. Such a problem has gained much attention since the early 2000's, and 
one expects that such a function is $f(g(\cX)) = cg(\cX)^2$ for some constant $c$; see for instance \cite{giulietti-korchmaros-2017} for the case when $g(\cX)$ is even.

Loosely speaking, the hypothesis that $2 | g(\cX)$ gives strong restrictions on the structure of a Sylow $2$-subgroup of $\aut(\cX)$, whence the powerful tools from finite group theory can be exploited. On the other hand, a general curve is ordinary, so it is natural to seek for a bound on the size of the automorphism group $\aut(\cX)$ of ordinary curves. In the ordinary case, a bound of type $|\aut(\cX)| \leq c(p)g(\cX)^{8/5}$ for some constant $c(p)$ depending on $p$ is expected; see \cite{KR}. If $\aut(\cX)$ is solvable and $p >2$, an even tighter bound 

\begin{equation}\label{kmbound}
|\aut(\cX)| \leq 34(g(\cX)+1)^{3/2}
\end{equation}
holds; see \cite {montanucci- korchmaros}. The latter bound is sharp up to the constant term; see \cite{montanucci-korchmaros-speziali}. The key point here is that \emph{large} solvable subgroups of $\aut(\cX)$ have a prescribed structure; see \cite[Lemma 2.1]{montanucci- korchmaros} and \cite[Lemma 4.1]{giulietti-korchmaros-2017}.
 
The object of this paper is to establish the characteristic $2$ analogue of \eqref{kmbound}  when $g(\cX)$ is even, see Corollary \ref{mariaportamivia}. More in detail, under the aforementioned hypotheses, we prove that
$$
|\aut(\cX)| < 35(g+1)^{3/2}.
$$
 As a final remark, we point out that our result is not a straightforward generalization of the results in \cite{montanucci- korchmaros}, since in characteristic $2$ many problems arise which do not occur in the odd characteristic case.

\section{Background and Preliminary Results}\label{sec2}

Well-known references for the theory of curves and algebraic function fields are \cite{hirschfeld-korchmaros-torres2008} and \cite{stbook}. Let $\cX$ be a curve defined over an algebraically closed field $K$ of positive characteristic $p$ for some prime $p$. We denote by $K(\cX)$ the function field of $\cX$. By a point $P \in \cX$ we mean a point in a nonsingular model of $\cX$; in this way, we have a one-to-one correspondence between points of $\cX$ and places of $K(\cX)$. 

Let $\aut(\cX)$ denote the geometric automorphism group of $\cX$. For a subgroup $S$ of $\aut(\cX)$, we denote by $K(\cX)^S$ the fixed field of $S$. A nonsingular model $\bar{\cX}$ of  $K(\cX)^S$ is referred to as the quotient curve of $\cX$ by $S$ and denoted by $\cX/S$; also, the corresponding covering morphism is denoted by $\pi: \cX \rightarrow \cX/S$. 
The field extension $K(\cX):K(\cX)^S$ is Galois with Galois group $S$. 

For a point $P \in \cX$, $S(P)$ is the orbit of $P$ under the action of $S$ on $\cX$ seen as a point-set. The orbit $S(P)$ is said to be long if $|S(P)| = |S|$, short otherwise. There is a one-to-one correspondence between the points in all the short orbits and ramified points in the extension $K(\cX):K(\cX)^S$. It might happen that $S$ has no short orbits; if this is the case, the cover $\pi: \cX \rightarrow \cX/S$ (or equivalently, the extension $K(\cX):K(\cX)^S$) is unramified. When clear from the context, we will discuss the orbits of a subgroup of $\aut(\cX)$ without referencing the curve $\cX$.

For $P \in \cX$, the subgroup $S_P$ of $S$ consisting of all elements of $S$ fixing $P$ is called the stabilizer of $P$ in $S$.  For a non-negative integer $i$, the $i$-th ramification group of $\cX$ at $P$ is denoted by $S_P^{(i)}$, and defined by
$$
S_P^{(i)}=\{\sigma \ | \ v_P(\sigma(t)-t)\geq i+1, \sigma \in S_P\}, 
$$
 where $t$ is a local parameter at $P$ and $v_P$ is the respective discrete valuation. Here $S_P=S_P^{(0)}$. Furthermore, $S_P^{(1)}$ is the unique Sylow $p$-subgroup of $S_P^{(0)}$, and the factor group $S_P^{(0)}/S_P^{(1)}$ is cyclic of order prime to $p$; see \cite[Chapter IV]{serre}. In particular, if $S_P$ is a $p$-group, then $S_P=S_P^{(0)}=S_P^{(1)}$.

 For a point $ R \in  \cX/S$, we define  the ramification index $e_R$  and the different exponent $d_R$ as follows: take $P \in \cX$ which satisfies $\pi(P) = R$. Then, $e_R = |S^{(0)}_P|$ and $d_R = \sum_{i \geq 0}(|S_P^{(i)}|- 1)$. Note that, since $\pi$ is a Galois covering, $e_R$ and $d_R$ do not depend on the choice of $P$.

Let $g$ and $\bar{g}$ be the genus of $\cX$ and $\bar{\cX}=\cX/S$, respectively. The Hurwitz genus formula is 
\begin{equation}\label{rhg}
2g-2=|S|(2\bar{g}-2)+\sum_{P \in \cX}\sum_{i \geq 0}\big(|S_P^{(i)}|-1\big);
\end{equation}
see \cite[Theorem 11.72]{hirschfeld-korchmaros-torres2008}.
 If $\ell_1,\ldots,\ell_k$  are the sizes of the short orbits of $S$, then (\ref{rhg}) yields
\begin{equation}\label{rhso}
2g-2 \geq |S|(2\bar{g}-2)+\sum_{\nu=1}^{k} \big(|S|-\ell_\nu\big),
\end{equation}
and equality holds if $\gcd(|S_P|,p)=1$ for all $P \in \cX$; see \cite[Theorem 11.57 and Remark 11.61]{hirschfeld-korchmaros-torres2008}.

Let $\gamma = \gamma(\cX)$ denote the $p$-rank (equivalently, the Hasse-Witt invariant) of $\cX$. If $S$ is a $p$-subgroup of $\aut(\cX)$ then
the Deuring-Shafarevich formula, see \cite[Theorem 4.2]{subrao}, states that
\begin{equation}
    \label{eq2deuring}
    %modifica 23 marzo
\gamma-1={|S|}(\bar{\gamma}-1)+\sum_{i=1}^k (|S|-\ell_i),
    \end{equation}
    %%modifica 2 marzo 2009
where $\bar{\gamma} = \gamma(\cX/S)$ is the $p$-rank of $\cX/S$ and $\ell_1,\ldots,\ell_k$ denote the sizes of the short orbits of $S$. Both the Hurwitz and Deuring-Shafarevich formulas hold true for rational and elliptic curves provided that $G$ is a finite subgroup.

A subgroup of $\aut(\cX)$ is a \emph{$p'$-group} (or \emph{a prime to $p$} group) if its order is prime to $p$. 
A subgroup $G$ of $\aut(\cX)$ is \emph{tame} if the  stabilizer of any point in $G$ is a $p'$-group. Otherwise, $G$ is \emph{non-tame} (or \emph{wild}). 
If $G$ is tame, then the classical Hurwitz bound $|G|\leq 84(g(\cX)-1)$ holds, but for non-tame groups this is far from being true. 

In this paper we deal with \emph{ordinary curves}, that is, curves for which equality $g(\cX) = \gamma(\cX)$ holds. We now collect some results regarding automorphism groups of ordinary curves. 
\begin{theorem}[\cite{nakajima 1987}, Theorem 3] \label{thm84g2}
Let $\cX$ be an ordinary curve with $g(\cX) \geq 2$. Then the following inequality 
\begin{equation}
|\aut(\cX)| \leq 84(g(\cX)-1)g(\cX)
\end{equation}
holds.
\end{theorem}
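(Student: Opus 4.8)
The plan is to prove Nakajima's bound $|\aut(\cX)| \leq 84(g-1)g$ for an ordinary curve by reducing the wild case to the tame Hurwitz bound through a careful analysis of a Sylow $p$-subgroup. Write $G = \aut(\cX)$, $g = g(\cX)$, and let $\gamma = \gamma(\cX) = g$ by ordinarity. If $p \nmid |G|$ then $G$ is tame and the classical Hurwitz bound $|G| \leq 84(g-1)$ already gives the (weaker, hence sufficient) conclusion, so the substance is the case $p \mid |G|$. First I would fix a Sylow $p$-subgroup $S$ of $G$ and study its action via the Deuring--Shafarevich formula \eqref{eq2deuring}, namely
\begin{equation*}
\gamma - 1 = |S|(\bar\gamma - 1) + \sum_{i=1}^{k}(|S| - \l_i),
\end{equation*}
where $\bar\gamma = \gamma(\cX/S) \geq 0$ and the $\l_i$ are the short-orbit sizes of $S$. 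Since each $\l_i$ divides $|S|$ and is a proper divisor for a short orbit, I expect this to force $|S|$ to be bounded linearly in $\gamma = g$; specifically, dropping the nonnegative $\bar\gamma$ term and using $\l_i \leq |S|/p$ on each short orbit should yield an inequality of the shape $g - 1 \geq |S|(k(1 - 1/p) - 1)$, which bounds $|S|$ in terms of $g$ once one checks that a $p$-group acting on a curve of positive $p$-rank must have enough short orbits (at least two, by the same formula since the left side is negative when $k \leq 1$ and $\bar\gamma = 0$).

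Having controlled the Sylow $p$-subgroup, the key step is to bound the index $[G:S]$, i.e. the tame part. The natural device is to pass to the quotient $\bar{\cX} = \cX/S$ and analyze the induced action of the tame-by-nature quotient data. Concretely, I would fix a point $P$ lying in a short orbit and examine its stabilizer $G_P$: by the structure theorem quoted in the excerpt, $G_P^{(1)} = (G_P)_1$ is the unique Sylow $p$-subgroup of $G_P$ and $G_P/G_P^{(1)}$ is cyclic of order prime to $p$. The product of the orders $84(g-1)$ and $|S|$ suggests the intended mechanism: combine the tame Hurwitz bound applied to $\bar\cX$ (or to the prime-to-$p$ quotient $G/S$-action, if $S$ is normal) with the linear bound $|S| \leq c(g-1)$ from Deuring--Shafarevich, so that $|G| = |S| \cdot [G:S]$ factorizes into a linear-in-$g$ piece times a Hurwitz-type linear piece, producing the quadratic bound $84(g-1)g$.

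The main obstacle is that $S$ need not be normal in $G$, so one cannot form the quotient group $G/S$ directly and apply Hurwitz to it. The honest route is to bound $[G:S]$ by a local argument at a fixed point or fixed short orbit: the number of Sylow $p$-subgroups and the order of $G_P/G_P^{(1)}$ must both be controlled, and here the nontriviality is that the tame factor $G_P^{(0)}/G_P^{(1)}$ has order at most $2g+1$ (or a similar linear bound coming from the fact that a tame cyclic stabilizer cannot be too large relative to the genus) while the number of $p$-Sylows is governed by the global Hurwitz formula \eqref{rhso} applied to the full group. The delicate bookkeeping is to assemble these two linear bounds so their product is exactly $84(g-1)g$ rather than something larger; I expect this to require distinguishing whether $\bar\cX$ is rational, elliptic, or of higher genus, and treating the low-genus quotient cases (where $\bar g \in \{0,1\}$) separately, since it is precisely there that the short-orbit contributions in both \eqref{rhso} and \eqref{eq2deuring} dominate and where the sharp constant $84$ enters through the classical analysis of the triangle-group signatures.
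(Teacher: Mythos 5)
This statement is not proved in the paper at all: it is quoted verbatim from Nakajima (Theorem 3 of \cite{nakajima 1987}) as a background result, so the only comparison available is with Nakajima's original argument. Measured against that, your first half is sound in spirit --- the Deuring--Shafarevich computation does force a Sylow $p$-subgroup to be linear in $g$ (though note that for $p=2$, $\bar\gamma=0$ and $k=2$ the inequality $g-1\geq |S|(k(1-1/p)-1)$ degenerates to $g-1\geq 0$, so that case needs separate treatment). The genuine gap is in the second half. You propose to write $|G|=|S|\cdot[G:S]$ and bound the two factors separately, but you acknowledge and then never resolve the obstruction you yourself raise: $S$ is not normal, $[G:S]$ is not the order of an automorphism group of any quotient curve, and a ``local argument at a fixed point'' only controls the cyclic tame quotient $G_P/G_P^{(1)}$ of a single stabilizer, not the global index of $S$ in $G$ (the number of Sylow $p$-subgroups is $[G:N_G(S)]$, which no formula quoted in the paper bounds linearly in $g$). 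So the step that would actually produce the quadratic bound is missing.

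The mechanism in Nakajima's proof is different and does not factor $|G|$ at all. One applies the Hurwitz genus formula to the full group $G$ over $\cY=\cX/G$, which by his Theorem~1 (the paper's Theorem~\ref{123}) is rational with one of three explicit signatures once $|G|>84(g-1)$; ordinarity enters through the vanishing of the second ramification groups (the paper's Theorem~2.2), which lets one write the different at a wildly ramified point as $d_Q=e_Q-1+|G_P^{(1)}|-1$. The quantity $-2+\sum_Q d_Q/e_Q$ is then bounded below by roughly $1/(42\gamma)$ using two local facts: the tame complement in $G_P$ has order at most $|G_P^{(1)}|-1$ (Nakajima's Proposition~1, which the present paper quotes elsewhere in the proof of Theorem~\ref{prop1}), and $|G_P^{(1)}|$ is at most linear in $\gamma=g$. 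The extra factor of $g$ in $84g(g-1)$ thus comes from the wild ramification index $e_{Q_1}$ sitting in a denominator of the Hurwitz contribution, not from an index $[G:S]$. To repair your argument you would need to abandon the product decomposition, invoke the triviality of $G_P^{(2)}$ explicitly (it never appears in your sketch, yet it is the precise point where ordinarity is used), and carry out the signature-by-signature estimate.
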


\begin{theorem}[\cite{nakajima 1987}, Theorem 2(i)]\label{2i}
Let $\cX$ be ordinary, and let $G$ be a finite subgroup of $\aut(\cX)$. Then for every point $P$ of $\cX$,  $G_P^{(2)}$ is trivial. 
\end{theorem}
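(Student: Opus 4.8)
The plan is to reduce to the case of a $p$-group and then to compare the Hurwitz genus formula \eqref{rhg} with the Deuring--Shafarevich formula \eqref{eq2deuring}. First I would fix a point $P$ and observe that, since $G_2(P)=G_P^{(2)}$ is contained in the Sylow $p$-subgroup $G_P^{(1)}$ of the stabilizer $G_P$, it is enough to work inside the $p$-group $S:=G_P^{(1)}$. Indeed, for $j\ge 1$ the ramification groups are computed from the valuation alone, so $S_P^{(j)}=S_P\cap G_P^{(j)}=G_P^{(j)}$, because $S_P=S$ and $G_P^{(j)}\subseteq G_P^{(1)}=S$; in particular $S_P^{(2)}=G_P^{(2)}=G_2(P)$. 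Hence it suffices to prove $S_P^{(2)}=1$ for the $p$-group $S$. (If $S$ is trivial there is nothing to prove.)

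Next I would apply both formulas to $S$ acting on $\cX$, with quotient $\bar{\cX}=\cX/S$ of genus $\bar g$ and $p$-rank $\bar\gamma$, and short orbit sizes $\ell_1,\dots,\ell_k$ with representatives $P_1,\dots,P_k$. The crucial step is to split the different exponent. Since $S$ is a $p$-group, $S_{P_i}=S_{P_i}^{(0)}=S_{P_i}^{(1)}$, so the contribution at $P_i$ is
\[
d_{P_i}=\sum_{j\ge 0}\big(|S_{P_i}^{(j)}|-1\big)=2\big(|S_{P_i}|-1\big)+e_i,\qquad e_i:=\sum_{j\ge 2}\big(|S_{P_i}^{(j)}|-1\big)\ge 0.
\]
Using $\ell_i|S_{P_i}|=|S|$, each orbit contributes $\ell_i d_{P_i}=2(|S|-\ell_i)+\ell_i e_i$ to the ramification sum in \eqref{rhg}. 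Dividing \eqref{rhg} by $2$ then yields
\[
g-1=|S|(\bar g-1)+\sum_{i=1}^{k}(|S|-\ell_i)+\tfrac{1}{2}\sum_{i=1}^{k}\ell_i e_i.
\]

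Finally I would subtract the Deuring--Shafarevich formula \eqref{eq2deuring} from this identity. The tame-type terms $\sum_i(|S|-\ell_i)$ cancel, leaving
\[
g-\gamma=|S|(\bar g-\bar\gamma)+\tfrac{1}{2}\sum_{i=1}^{k}\ell_i e_i.
\]
Since $\cX$ is ordinary the left-hand side vanishes, while on the right $\bar g-\bar\gamma\ge 0$ (the $p$-rank never exceeds the genus, also for $\bar g\in\{0,1\}$, where the two formulas still apply) and each $e_i\ge 0$. Hence every summand is zero: the quotient $\bar{\cX}$ is again ordinary and, more to the point, $e_i=0$ for all $i$, that is $S_{P_i}^{(j)}=1$ for every $j\ge 2$. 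In particular $S_P^{(2)}=1$ at every point, whence $G_2(P)=G_P^{(2)}=1$.

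The main obstacle is the bookkeeping in the second step: one must recognize that the Deuring--Shafarevich formula accounts for exactly the ``$2(|S_{P_i}|-1)$'' part of each different exponent, so that the genuinely wild part $\tfrac12\sum_i\ell_i e_i$ is isolated as a non-negative quantity which ordinariness forces to vanish. One must also take care that the comparison uses the \emph{same} $p$-group $S$ in both formulas and that the low-genus quotients $\bar g\in\{0,1\}$ are admissible, as recalled after \eqref{eq2deuring}.
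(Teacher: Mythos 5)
Your argument is correct. The paper does not prove this statement at all -- it is quoted verbatim from Nakajima's 1987 paper (Theorem 2(i)) -- but your proof is essentially Nakajima's original one: restrict to the $p$-group $S=G_P^{(1)}$, subtract the Deuring--Shafarevich formula from the Hurwitz genus formula so that the wild excess $\tfrac12\sum_i\ell_i e_i$ is isolated as a non-negative term, and let ordinariness force it to vanish.
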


Theorem \ref{2i} yields a nice simplification of the Hurwitz formula. As we will use it a number of times, we state it once and for all in the following corollary.

\begin{corollary}\label{2ii}
Let $\cX$ be ordinary, and let $G$ be a finite subgroup of $\aut(\cX)$. Let $g$ and $\bar{g}$ be the genus of $\cX$ and $\bar{\cX}=\cX/G$, respectively; then the Hurwitz genus formula for $\cX \rightarrow \cX/G$ reads
\begin{equation}\label{rhord}
2g-2=|G|(2\bar{g}-2)+\sum_{P \in \cX}\big(|G_P^{(0)}|+ |G_P^{(1)}|-2\big).
\end{equation}
\end{corollary}

We end this section recalling some definitions from Group Theory and group actions. 

\begin{definition}
A subgroup $H$ of a group $G$ is said to be a \emph{minimal normal subgroup} if $H$ is normal in $G$ and any normal subgroup of $G$ properly contained in $H$ is trivial.
If $G$ is solvable then a minimal normal subgroup $H$ of $G$ is elementary abelian, see \cite[Theorem 5.46]{machi}.
\end{definition}

\begin{definition}
For a group $G$, the \emph{odd core} $O(G)$ is its maximal normal subgroup of odd order. A group is \emph{odd core-free} if $O(G)$ is trivial. 
\end{definition}

\begin{definition}
Let $G$ be a group acting on a  set $X$. G is said to act \emph{semiregularly} on $X$ if the stabilizer $G_x$ of any $x \in X$ is trivial.
\end{definition}

\begin{definition}
A group $G$ acts \emph{n-transitively} on a set $X$ if for any two $n$-tuples $(x_1,\ldots,x_n)$, $(y_1,\ldots,y_n)$ of distinct elements in $X$ there is some $g \in G$ with $x_i^g =y_i$ for $i=1,\ldots,n$.
The group $G$ acts \emph{sharply $n$-transitively} on $X$ if this $g \in G$ is unique.
\end{definition}

\section{Solvable subgroups of $\aut(\cX)$ for $p = 2$}\label{sec:p=2}

Throughout this Section, $\cX$ and $\cY$ are curves with $g(\cX), g(\cY) \geq 2$ defined over an algebraically closed field $K$ of characteristic $2$. Also,  $G \leq \aut(\cX)$ denotes a solvable automorphism group of $\aut(\cX)$. 
The aim of this section is to prove the bound 
\begin{equation} \label{bound}
|G| < 35(g(\cX)+1)^{3/2}
\end{equation}
when $\cX$ is ordinary of even genus. 

\begin{rem} \label{remtame}
 If $g(\cX) \geq 2$ then $84(g(\cX)-1) \leq 35(g(\cX)+1)^{3/2}$. Hence in the proof of Theorem \ref{prop1} and Theorem \ref{prop2} we will assume that the classical Hurwitz bound for $G$ is not satisfied. From \cite[Theorem 11.56]{hirschfeld-korchmaros-torres2008}, this implies that $|G|$ is even.
 \end{rem}
The following lemma describes the short orbits of large solvable automorphism groups of a curve with respect to its $p$-rank.

\begin{lemma}\label{terribile} 

Let $H$ be a solvable subgroup of $\aut(\cX)$ of even order containing a unique Sylow $2$-subgroup $Q$.
Suppose that a complement $U$ of $Q$ in $H$ is abelian.  If
\begin{equation}
\label{eq16nov2016}
{\mbox{$|H| > 12(g-1)$}},
\end{equation}
Then $\cX/Q$ is rational. Further, $U$ is cyclic, $H$ has two short orbits, and one of the following holds.
 \begin{itemize}
\item[\rm(i)]  $\cX$ has positive $p$-rank, $Q$ has exactly two (non-tame) short orbits, and they are also the only short orbits of $H$; 
\item[\rm(ii)] $\cX$ has zero $p$-rank and $H$ fixes a point.
\end{itemize}
\end{lemma}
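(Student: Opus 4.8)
The plan is to mimic the structure of Lemma \ref{22dic2015} (the $p>2$ analogue from \cite{giulietti-korchmaros-2017}) but with $p=2$, where the obstacle is that $Q$ is now a $2$-group and the subtleties of wild ramification at $p=2$ must be handled by hand. First I would pass to the quotient curve $\bar{\cX} = \cX/Q$ and apply the Deuring--Shafarevich formula \eqref{eq2deuring} to control the $p$-rank together with the Hurwitz genus formula \eqref{rhg} applied to $Q$. Since $Q$ is a normal $2$-subgroup with $[H:Q]$ odd, the abelian complement $U$ acts on $\bar{\cX}$ as a tame group, so the quotient $\bar{\cX}/(H/Q) = \cX/H$ is accessible through the tame Hurwitz bound. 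The key numerical input is the hypothesis $|H| > 12(g-1)$, which should force $\bar{\cX}$ to be rational: if $g(\bar{\cX}) \geq 1$, then the tame action of $U \cong H/Q$ on $\bar{\cX}$ combined with the contribution of $Q$'s ramification to \eqref{rhg} would violate the inequality, so I would derive a contradiction by bounding $|H|$ from above under the assumption $g(\bar{\cX}) \geq 1$.

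Next I would analyze the short orbits of $Q$ on $\cX$. Because $\cX$ is ordinary, Nakajima's Theorem (the one stating $G_2(P)$ is trivial) forces the second ramification group to vanish, so at each ramified point the wild part contributes exactly $|S_P^{(1)}| - 1 = |S_P| - 1$ to the Hurwitz formula and nothing beyond. Applying \eqref{rhg} to $Q$ with $\bar{\cX}$ rational gives
\begin{equation}
\label{eq:Qgenus}
2g - 2 = |Q|(-2) + \sum_{j} (|Q| - \ell_j),
\end{equation}
where the $\ell_j$ are the short orbit sizes of $Q$; here I use that, by ordinariness and triviality of the second ramification group, each ramified point contributes precisely through its stabilizer without higher jumps. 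Combining this with the Deuring--Shafarevich formula \eqref{eq2deuring} for $Q$ lets me compare $\gamma$ and $g$: ordinariness $\gamma = g$ then pins down the number and type of short orbits. The dichotomy in (i) and (ii) should emerge exactly as in the odd case --- either $\bar{\cX}$ has positive genus-independent $p$-rank contribution giving two non-tame short orbits, or the $p$-rank of $\bar{\cX}$ is zero and $H$ fixes the unique wildly ramified point.

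To see that $Q$ has exactly two short orbits (case (i)) and that these are the only short orbits of $H$, I would use that $H/Q$ permutes the short orbits of $Q$ and that an abelian group acting on the rational quotient $\bar{\cX} \cong \mathbb{P}^1$ with the right ramification behavior has at most two fixed points unless it is trivial. Finally, the cyclicity of $U$ follows because $U$ embeds into $\aut(\mathbb{P}^1)$ fixing the images of the short orbits; an abelian subgroup of $\PGL(2,K)$ fixing two points is a subgroup of the multiplicative torus, hence cyclic (as $U$ is tame, its order is odd and prime to $p=2$, so it is a cyclic subgroup of $K^*$). The hard part will be establishing that $\bar{\cX}$ is rational and ruling out configurations with more than two short orbits under the relatively weak numerical hypothesis $|H| > 12(g-1)$; unlike the odd-characteristic argument, at $p=2$ one cannot appeal to the cleaner structure of odd-order $Q$, so the ramification bookkeeping in \eqref{eq:Qgenus} must be pushed carefully, and the constant $12$ is presumably what makes the inequality tight enough to exclude the alternatives.
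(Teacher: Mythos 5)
Your overall strategy --- splitting on the genus of $\cX/Q$, combining the Hurwitz and Deuring--Shafarevich formulas for $Q$, and getting cyclicity of $U$ from the structure of finite subgroups of $\PGL(2,K)$ --- is the same as the paper's. But there is a genuine error that undermines the second half of your argument: you assume $\cX$ is ordinary. Ordinariness is \emph{not} a hypothesis of this lemma; in the paper it is imposed only later in the section (from Lemma \ref{2syl} onward), and it cannot be assumed here, since conclusion (ii) explicitly allows $\gamma(\cX)=0$, which contradicts $\gamma(\cX)=g(\cX)$ for $g\ge 1$, and the lemma is later applied to quotient curves that are not known to be ordinary. Consequently you may not invoke Nakajima's theorem to kill the second ramification group; your displayed equality for $2g-2$ is unjustified as an equality (and even granting trivial second ramification groups it drops a factor of $2$, since each wildly ramified point contributes $(|Q_P^{(0)}|-1)+(|Q_P^{(1)}|-1)=2(|Q_P|-1)$); and the step where ``ordinariness $\gamma=g$ pins down the number and type of short orbits'' would collapse the dichotomy (i)/(ii) rather than derive it. The correct route is to use only the lower bound $d_P\ge 2(|Q_P|-1)$ in the Hurwitz formula to exclude $g(\cX/Q)\ge 1$ and superfluous short orbits, and then to read the dichotomy off the Deuring--Shafarevich formula for $Q$ over the rational quotient: two short $Q$-orbits of lengths $2^a,2^b<2^k=|Q|$ give $\gamma=2^k-2^a-2^b+1>0$, while a single short orbit gives $\gamma-1=-2^a$, forcing $\gamma=0$, $a=0$, hence a point fixed by $Q$ and therefore by $H$.

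Two further places where your sketch does not yet close. First, your argument for ``at most two short orbits'' only treats $Q$-orbits lying over the two fixed points of $\bar U$ on $\mathbb{P}^1$; a short $Q$-orbit could lie over a point \emph{moved} by $\bar U$, and ruling this out is exactly where the constant $12$ in \eqref{eq16nov2016} is consumed (a separate Hurwitz estimate is needed for $u\ge 5$ and for $u=3$). Second, the case $g(\cX/Q)=1$ is not disposed of by ``the tame action of $U$'' alone, since an abelian group can act on an elliptic curve with arbitrarily large order via translations; what is needed is that the cover $\cX\to\cX/Q$ must ramify (else $\cX$ is elliptic) together with the fact that an odd-order point stabilizer on an elliptic curve in characteristic $2$ has order at most $3$, which converts the ramification contribution into the bound $|H|\le 6(g-1)$.
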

\begin{proof}
By the Schur-Zassenhaus Theorem, we have $H=Q\rtimes U$, that is, $H$ is a semidirect product of the form $Q\rtimes U$. Set $|Q|=2^r$, $|U|=u$.

We prove that $\bar{\cX} = \cX/ Q$ is rational by assuming that $g(\bar{\cX}) = \bar{g} \geq 2$ or $\bar{g} = 1$ and finding a contradiction (see Cases $1$ and $2$ below).

\textbf{Case 1:} $\bar g\geq 2$. Since $\aut(\bar{\cX})$ has a subgroup isomorphic to $U$, \cite[Theorem 11.79]{hirschfeld-korchmaros-torres2008} yields $4\bar g+4\geq |U|.$ Furthermore, from
the Hurwitz genus formula applied to the covering $ \cX \rightarrow \cX/Q$, we have $g-1\geq |Q|(\bar g-1)$. Therefore, from  \eqref{eq16nov2016}
$$(4\bar g+4)|Q|\geq |U||Q|=|H|>12(g-1) \geq 12 |Q|(\bar g-1),$$
whence, since $\bar{g} \geq 2$, we get $$12<4 \ \frac{\bar g+1}{\bar g-1}\leq 12,$$
a contradiction.

\textbf{Case 2:} $\bar g =1$. In this case $\cX\rightarrow \bar{\cX}$ ramifies, otherwise $\cX$ itself would be elliptic. Thus, there exists a point $P \in \cX$ with non-trivial stabilizer $Q_P \leq Q$. Let $o$ be the orbit of $P$ under the action of $H$. Then, as $Q$ is normal in $H$, $o$ is the union of some  %$Q$ has %some short orbits. 
%at least one short orbit $o_1$}. \textcolor{blue}{Take $o_1$ together with its non-trivial images}
 $Q$-short orbits, say $o = o_1 \cup o_2 \cup \ldots \cup o_{u_1}$, where $|o_i| = 2^v$ for some $v< r$ and every $i = 1,\ldots,u_1$. % Since $Q$ is a normal subgroup of $H$, $o=o_1\cup\ldots\cup o_{u_1}$ 
% Note that $o_P$ is an $H$-orbit of size $u_12^v$ where $2^v=|o_1|=\ldots =|o_{u_1}|$ and $v< r$.
In particular, $o$ is an $H$-orbit of size $u_12^v$. 

 Equivalently, the stabilizer $H_P$ of $P$ has order
$2^{r-v}u/u_1$, and it is a semidirect product $Q_1\rtimes U_1$ where $|Q_1|=2^{r-v}$ and $|U_1|=u/u_1$ for a subgroup $Q_1$ of $Q$ and $U_1$ of $U$ respectively.
The point $\bar{P}$ lying under $P$ in the covering $\cX\rightarrow \bar{\cX}$ is fixed by the factor group $\bar{U_1}=U_1Q/Q$. Since $\bar{\cX}$ is elliptic, \cite[Theorem 11.94]{hirschfeld-korchmaros-torres2008} implies $|\bar{U}_1|\leq 3$ and so $u_1 \geq u/3$. 
From the Hurwitz genus formula applied to  $\cX \rightarrow \bar{\cX}$,
$$2g-2\geq 2^v u_1 \sum_{i \geq 0}(Q_P^{(i)}-1)\geq 2^v u_1 \cdot 2(2^{r-v}-1) \geq 2^v \frac{u}{3} \cdot 2(2^{r-v}-1) \geq 2^v \frac{u}{3} (2^{r-v})=\frac{2^vu}{3}=\frac{|H|}{3},$$
where the fist inequality comes from the fact that we are just looking at points in $o$, the second inequality follows from the fact that we are looking to the first two ramification groups, and the last inequality follows from $v< r$.
Thus $|H| \leq 6(g-1)$, contradicting (\ref{eq16nov2016}).

Hence, we get $\bar g=0$. In this case, $Q$ has at least one short orbit. Furthermore, $\bar{U}=UQ/Q$ is isomorphic to a subgroup of $\PGL(2,K)\cong\aut(\bar{\cX})$. 

Since $U\cong \bar{U}$ has odd order and $\rm{char}(K)=2$,  the classification of finite subgroups of $\PGL(2,K)$, see \cite[Theorem 1]{maddenevalentini1982}, shows that $U$ is a cyclic group, $\bar{U}$ fixes two points $\bar{P}_0$ and $\bar{P}_\infty$ but no non-trivial element in $\bar{U}$ fixes a point other than $\bar{P}_0$ and $\bar{P}_\infty$.

Let $o_\infty$ and $o_0$ be the $Q$-orbits lying over $\bar{P}_0$ and $\bar{P}_\infty$, respectively. 
Obviously, $o_\infty$ and $o_0$ are short orbits of $H$. We show that $Q$ has at most one  or two short orbits, the candidates being $o_\infty$ and $o_0$.

By way of contradiction, assume there is a $Q$-orbit $o$ of size $2^m$ with $m<r$ which lies over a point $\bar{P}\in \bar{\cX}$ different from both $\bar{P}_0$ and $\bar{P}_\infty$. Since the orbit of $\bar{P}$ in $\bar{U}$ has length $u$, then the $H$-orbit of a point $P\in o$ has length $u2^m$. 

If $u>3$, that is $u \geq 5$, the Hurwitz genus formula applied to $\cX \rightarrow \cX/Q$ gives
 $$2g-2\ge -2 \cdot2^{r}+u2^m\cdot 2(2^{r-m}-1)\geq -2 \cdot 2^k+u2^m(2^{r-m})=2^{r}(u-2)>2^{r}\frac{u}{2}=\frac{|H|}{2},$$
a contradiction to (\ref{eq16nov2016}).

If $u=3$ then 
 $$2g-2\ge -2\cdot 2^{r} +3\cdot2^m \cdot 2(2^{r-m}-1) \geq -2 \cdot 2^{r} +3\cdot2^m(2^{r-m})=2^{r}=|Q|,$$
hence $|H|=3|Q| \leq 6(g-1)$ contradicting (\ref{eq16nov2016}).

We proved that $H$ has exactly two short orbits $o_0$ and $o_\infty$. Assume that they are both short orbits of $Q$. If their lengths are $2^a$ and $2^b$ with $a,b<r$, the Deuring-Shafarevich formula applied to $\cX\rightarrow \cX/Q$ gives
$$\gamma(\cX)-1=-2^{r}+(2^{r}-2^a)+2^{r}-2^b,$$
whence $\gamma(\cX)=2^{r}-(2^a+2^b)+1 \geq 2^{r} -2 \cdot 2^{r-1}+1>0$. This leads to case (i).

For case (ii), the same argument shows that if $Q$ has just one short orbit, say $o_0$ of length $2^a$ with $a<r$, then $\gamma(\cX)=0$ and the short orbit consists of a single point $P$. In fact in this case the Deuring-Shafarevich formula applied to $\cX\rightarrow \cX/Q$ gives

$$\gamma(\cX)-1=-2^{r}+(2^{r}-2^a)=-2^a,$$
which implies $\gamma(\cX)=0$ and $a=0$ and so $o_0=\{P\}$. Since $Q$ is a normal subgroup of $H$, and the \'etale fundamental group of the projective line is trivial (see \cite[Exerxise 6.22]{lenstra}, $P$ is also fixed by $H$. This leads to case (ii). 
\end{proof}

From now on, we assume that $\cX$ is ordinary of even genus.
We first show that this gives strong restrictions both on the structure and on the action of a Sylow $2$-subgroup of $\aut(\cX)$.

\begin{lemma} \label{2syl}

Let $S$ be a non-trivial Sylow $2$-subgroup of $\aut(\cX)$.
  Then $S$ is an elementary abelian $2$-group fixing an odd number $n \geq 1$ of points of $\cX$.
\end{lemma}

\begin{proof}
Let $\cX^\prime$ be the quotient curve $\cX/S$ and $g^\prime=g(\cX^\prime)$. From Corollary \ref{2ii}, %the Hurwitz genus formula applied to $\cX \rightarrow \cX^\prime$ and Theorem \ref{2i}, we get 
%$$2g-2=|S|(2g^\prime -2)+2 \sum_{P \in \cX} (|S_P|-1)+2\sum_{i=1}^{k} \ell_i ( |S| / \ell_i-1),$$
%where $\ell_1 ,\dots, \ell_k$ are the short orbits of $S$. Hence,
$$g-1=|S|(g^\prime -1)+ \sum_{P \in \Delta} (|{S}|-1)+\sum_{i=1}^{k} \ell_i ( |S| / \ell_i-1)$$
where $\Delta=\{P \in \cX \mid S_P=S \}$, and $\ell_1 ,\dots, \ell_k$ are the lenghths of the  short orbits of $S$ of length at least equal to $2$. %Hence,

Since $|S|(g^\prime -1)$ and $\sum_{i=1}^{k} \ell_i ( |S_P| / \ell_i-1)$ are both even or zero while $g-1$ is odd, $\sum_{P \in \cX} (|{S}_P|-1)$ cannot vanish. Thus, %the set
%$$\Delta=\{P \in \cX \mid S_P=S \},$$
$\Delta$ has odd length $n$, which is in particular at least equal to $1$. Also, by \cite[Corollary of Theorem 2]{nakajima 1987}, $S$ is an elementary abelian $2$-group.

\end{proof}

To prove the bound \eqref{bound}, we apply an inductive argument on the genus $g(\cX)$. The main steps of our proof can be described as follows.

\begin{itemize} \item If $g(\cX)=2$ then $|G| \leq 48$ from \cite[Proposition 11.99]{hirschfeld-korchmaros-torres2008}, whence the bound $|G| \leq 35(g(\cX)+1)^{3/2}$  holds. 
\item Suppose that $g(\cX) > 2$. Two cases are distinguished.   \begin{enumerate} \item If the odd-core $O(G)$ of $G$ is non-trivial, then consider the quotient curve $\cX/O(G)$. We prove that for $g(\cX/O(G))=0$ the bound $|G| \leq 35(g(\cX)+1)^{3/2}$ is satisfied, while the case $g(\cX/O(G))=1$ is impossible. For $g(\cX/O(G)) \geq 2$, we prove that $\cX/O(G)$ is ordinary and that $g(\cX/O(G))$ must be even. Hence, the induction hypothesis holds for the quotient group $G/O(G)$ of automorphisms of the quotient curve $\cX/O(G)$ since $g(\cX/O(G))<g(\cX)$, so that $|G/O(G)| \leq 35(g(\cX/O(G))+1)^{3/2}$. If $|G| > 35(g(\cX)+1)^{3/2}$, then the Hurwitz genus formula applied to \\$\cX \rightarrow \cX/O(G)$ yields   $|G/O(G)| > 35(g(\cX/O(G))+1)^{3/2}$, a contradiction. 
\item If $O(G)$ is trivial then $G$ admits a minimal normal subgroup which is an elementary abelian $2$-group. Let $Q$ be the largest normal $2$-subgroup of $G$. We apply to $Q$ the same strategy as for the previous case, that is, we first analyze the cases $g(\cX/Q)=0$ and $g(\cX/Q)=1$, and then we apply the induction hypothesis to the quotient group $G/Q$ seen as an automorphism group of the quotient curve $\cX/Q$ when $g(\cX/Q) \geq 2$.  In this way, if $|G| > 35(g+1)^{3/2}$, then we obtain a contradiction.
 \end{enumerate} \end{itemize} Note that, for the inductive argument to work, it is necessary to ensure that the quotient curves $\cX/O(G)$ and $\cX/Q$ both have even genus if they are neither rational nor elliptic. The following lemmas will be used for this purpose, see Proposition \ref{prop20nov2016} and Theorem \ref{prop2}. 

%Lemma \ref{prop19nov2016} below is a sort of a converse of Lemma \ref{2syl}. Here, the condition that a Sylow $2$-subgroup fixes an odd number of points is replaced by two conditions, namely conditions (I) and (II), which are stronger. On the other hand, Condition (III) is implied by the fact that the curve is ordinary. 

%\subsection{First technical lemma}
\begin{lemma}\label{prop19nov2016}
 Let $\cY$ be a curve of genus $g(\cY) \geq 2$. Let $H$ be a solvable subgroup of even order of $\aut(\cY)$ satisfying the following conditions.
\begin{itemize}
\item[(I)] A Sylow $2$-subgroup of $H$ fixes a point of $\cY$.
\item[(II)] The action of $H$ on the set of points whose stabilizer contains a Sylow $2$-subgroup of $H$ has an odd number of orbits. %The number of non-tame short orbits comprising the fixed points of the Sylow $2$-subgroups of $H$ is odd.
\item[(III)] At any point of $\cY$, the second ramification group of $H$ is trivial.
 \end{itemize}
%\begin{lemma}
Then  $g(\cY)$ is even.
\end{lemma}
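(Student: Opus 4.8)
The plan is to compute the parity of $g(\cX)$ directly from the Hurwitz genus formula \eqref{rhg}, exploiting the hypotheses on the structure of the Sylow $2$-subgroups. First I would write $2g(\cX)-2 = |G|(2\bar g - 2) + D$, where $\bar g = g(\cX/G)$ and $D = \sum_{P}\sum_{i\ge 0}(|G_P^{(i)}|-1)$ is the total contribution of the different. Since the goal is to pin down $g(\cX) \pmod 2$, I would reduce this identity modulo a suitable power of $2$: from $2g-2 \equiv D \pmod{\text{(power of 2)}}$ one sees that the parity of $g(\cX)$ is governed by the $2$-adic behavior of $D$, so the whole argument amounts to isolating the odd part of $D$.

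Next I would split the different $D$ into contributions from tame and non-tame orbits. Hypothesis (III) is the crucial simplifier: since the second ramification group $G_P^{(2)}$ is trivial at every point, the inner sum $\sum_{i\ge 0}(|G_P^{(i)}|-1)$ collapses to $(|G_P^{(0)}|-1) + (|G_P^{(1)}|-1)$, with $G_P^{(1)}$ the Sylow $2$-subgroup of $G_P$. Grouping the points of $\cX$ into $G$-orbits and writing each orbit's contribution as (orbit length) times (per-point contribution), the tame orbits — where $G_P^{(1)}$ is trivial and $|G_P|$ is odd — contribute terms that I expect to be even after multiplication by the orbit length, or whose parity cancels in pairs. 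The essential content is that the \emph{odd} part of $D$ comes precisely from the non-tame short orbits, i.e. the fixed-point orbits of the Sylow $2$-subgroups named in (II), and hypothesis (I) guarantees such orbits actually exist and are fixed by a full Sylow $2$-subgroup (so the orbit length is odd, of index equal to $[G:N_G(\text{Sylow})]$-type quantity).

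The key step is then a careful parity count of the non-tame contribution. For a non-tame short orbit on which a Sylow $2$-subgroup $S$ acts with a fixed point $P$, the per-point different term is $(|G_P^{(0)}|-1)+(|G_P^{(1)}|-1)$; here $|G_P^{(1)}|=|S|$ is even so $|G_P^{(1)}|-1$ is odd, while $|G_P^{(0)}|-1$ has a controllable parity since $G_P^{(0)}/G_P^{(1)}$ is cyclic of odd order. Multiplying by the (odd) orbit length preserves oddness, so each such non-tame short orbit contributes an odd summand to $D$. By hypothesis (II) the number of these non-tame short orbits is odd, so $D$ is congruent to an odd number modulo $2$; combined with $2g(\cX)-2 = |G|(2\bar g-2)+D$ this forces $g(\cX)$ to be even. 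The main obstacle I anticipate is the bookkeeping of the tame orbits and of the $|G_P^{(0)}|-1$ factors: I must verify that every tame contribution and every "non-$2$-part" factor is even (or pairs off), so that the parity of $D$ is \emph{exactly} the parity of the number of distinguished non-tame orbits, with no stray odd contribution sneaking in from elsewhere.
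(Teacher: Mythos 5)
Your overall strategy --- apply the Hurwitz genus formula, use (III) to truncate the ramification filtration at each wild point to $(|G_P^{(0)}|-1)+(|G_P^{(1)}|-1)$, sort the orbits into tame ones, wild ones not fixed by a full Sylow $2$-subgroup, and the distinguished ones fixed by a full Sylow $2$-subgroup, then invoke (II) --- is exactly the paper's. But your central parity count is wrong, and the proof collapses with it. At a wildly ramified point $P$ both $|G_P^{(0)}|$ and $|G_P^{(1)}|$ are even, so $(|G_P^{(0)}|-1)+(|G_P^{(1)}|-1)$ is a sum of two odd numbers and hence \emph{even}; multiplying by an odd orbit length keeps it even. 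So each distinguished orbit contributes an even, not an odd, summand, and the total different $D$ is always even (as it must be, since $2g-2$ and $|G|(2\bar g-2)$ are both even). Consequently ``$D$ is odd'' is false, and even if it held it would yield the absurdity that $2g-2$ is odd rather than the conclusion that $g$ is even. A mod $2$ analysis of $D$ can never detect the parity of $g$; the obstacle you flag at the end (``the parity of $D$ is exactly the parity of the number of distinguished orbits'') is precisely the impossible statement.

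The correct modulus is $4$. Writing the Hurwitz formula as $2(g-1)=2|G|(g(\cX/G)-1)+D$, the first term is $\equiv 0 \pmod 4$ because $|G|$ is even; a tame short orbit contributes (even length)$\times$(even term)$\equiv 0\pmod 4$; a wild orbit whose point stabilizers do not contain a full Sylow $2$-subgroup of $G$ has even length and even per-point term, again $\equiv 0 \pmod 4$; while for an orbit $o_i$ of points fixed by a full Sylow $2$-subgroup $S_{P_i}$ the length $|o_i|$ is odd and
$$(|G_{P_i}|-1)+(|S_{P_i}|-1)=|S_{P_i}|\Bigl(\tfrac{|G_{P_i}|}{|S_{P_i}|}+1\Bigr)-2\equiv 2 \pmod 4,$$
since $|S_{P_i}|\ge 2$ and $|G_{P_i}|/|S_{P_i}|+1$ is even. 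Hypothesis (II) says the number of such orbits is odd, so $D\equiv 2\pmod 4$, whence $2(g-1)\equiv 2 \pmod 4$, $g-1$ is odd, and $g$ is even. You need to redo your key step at this level of precision; the skeleton of your argument is right, but only the mod $4$ computation extracts the parity of $g$.
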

\begin{proof} Let $o_1,\ldots, o_k$ denote the non-tame short orbits of $H$. Choose one point $P_i$ from $o_i$, and denote by $S_{P_i}$ the Sylow $2$-subgroup of the stabilizer of $P_i$ in $H$. 

From (III), $S_{P_i}^{(2)}$ is trivial. If $H$ also has tame short orbits, say $\theta_1,\ldots,\theta_m$, choose one point $Q_j$ from each of them.
From the Hurwitz genus formula applied to $\cX \rightarrow \cX/H$,
\begin{equation}
\label{eq120nov2016}
2(g(\cY)-1)=2|H|(g(\cY/H)-1)+\sum_{i=1}^k |o_i|(|H_{P_i}|-1+|S_{P_i}|-1)+\sum_{j=1}^m |\theta_j|(|H_{Q_j}|-1).
\end{equation}
The stabilizer of a point in any tame short orbit is of odd order while the length of any tame short orbit is even. Therefore, $\sum_{j=1}^m |\theta_j|(|H_{Q_j}|-1)$ is divisible by $4$.
Furthermore, if $S_{P_i}$ for some $1\le i \le k$ is not a Sylow $2$-subgroup of $H$ then $|o_i|=|H|/|H_{P_i}|$ is even, and hence $|o_i|(|H_{P_i}|-1+|S_{P_i}|-1)$ is also divisible by $4$.
If $S_{P_i}$ for some $1\le i \le k$ is a Sylow $2$-subgroup of $H$ then $|o_i|=|H|/|H_{P_i}|$ is odd whereas
$$|H_{P_i}|-1+|S_{P_i}|-1=|S_{P_i}|\Big(\frac{|H_{P_i}|}{|S_{P_i}|}+1\Big)-2\equiv 2 \pmod 4.$$
Therefore, (II) yields that the positive integer in the right hand side of (\ref{eq120nov2016}) is congruent to $2 \pmod 4$ whence $g(\cY)-1$ is odd.
\end{proof}

By Remark \ref{remtame}, we can assume that $|G|>84(g(\cX)-1)$. From \cite[Theorem 11.56]{hirschfeld-korchmaros-torres2008}, $G$ has either $1$ or $2$ non-tame short orbits. In Lemma  \ref{prop19nov2016} we dealt with the former case, while in Lemma \ref{prop21nov2016} below we deal with the latter.

\begin{lemma}\label{prop21nov2016}
Let $\cY$ be such that $g(\cY) \geq 2$ and $\gamma(\cY) > 0$. Let $H$ be a solvable subgroup of $\aut(\cY)$ of order $|H| >84(g(\cY)-1)$ satisfying the following condition.
\begin{itemize}
\item[(IV)] $H$ has exactly two non-tame short orbits whose points are fixed by Sylow $2$-subgroups of $H$.%$H$ has \textcolor{blue}{exactly} two non-tame short orbits both consisting of fixed points of Sylow $2$-subgroups of $H$.
\end{itemize}
Then $H=S_2\rtimes U$ where $S_2$ is a (elementary abelian minimal normal) Sylow $2$-subgroup of $H$ with a cyclic complement $U$. Moreover, $\cY/{S_2}$ is rational and $g(\cY)$ is even.
\end{lemma}
\begin{proof} By contradiction. Take for $\cY$ a curve that is a minimal counterexample to Lemma \ref{prop21nov2016} with respect to the genus. From $|H|>84(g(\cY)-1)$ and (IV), $H$ has exactly two short orbits, say $o$ and $\theta$, both non-tame; see \cite[Theorem 11.56]{hirschfeld-korchmaros-torres2008}.

Now, let $P \in o$. Choose a minimal normal subgroup $T$ of $H$,  consider the quotient curve $\cY/T$  and the subgroup $\bar{H}=H/T$ of $\aut(\cY/T)$.
Recall that $T$ is elementary abelian as $H$ is solvable.  Furthermore, $\bar{H}$ has two non-tame short orbits  on $\cY/T$. In fact, the points lying under the points in $o$, as well as those lying under the points in $\theta$, form two orbits of $\bar{H}$, say $\bar{o}$ and $\bar{\theta}$, respectively. Also, let $S_2$ be a Sylow $2$-subgroup of $H$ fixing $P$, and  $\bar{S}_2=S_2T/T$. Finally, let $\bar{P}\in \bar{o}$. % is a subgroup of $\bar{H}$ fixing  a point $\bar{P}$ in $\bar{o}$.  

%Rivedere!!!!!

Three cases are separately investigated according to the value of $g(\cY/T)$.

\textbf{Case 1: $g(\cY/T)\geq 2$}. In this case the Hurwitz genus formula applied to $\cY \rightarrow \cY/T$ implies $g(\cY)-1\geq |T|(g(\cY/T)-1)$. This together with $|H|>84(g(\cY)-1)$ yields $|\bar{H}|>84(g(\cY/T)-1)$. %Furthermore, $\bar{H}$ has two non-tame short orbits  on $\cY/T$. In fact, the points lying under the points in $o$, as well as those lying under the points in $\theta$, form two orbits of $\bar{H}$, say $\bar{o}$ and $\bar{\theta}$, respectively. 

%To prove that they consist of fixed points of some Sylow $2$-subgroup of $\bar{H}$,  take a point $\bar{P}\in \bar{o}\cup \bar{\theta}$ together with $P\in o$.

%Let $S_2$ be a Sylow $2$-subgroup of $H$ fixing $P$. Then $\bar{S}_2=S_2T/T$ is a subgroup of $\bar{H}$ fixing $\bar{P}$. 

We prove that $\bar{S}_2$ is a Sylow $2$-subgroup of $\bar{H}$. If $|T|$ is odd then $S_2\cong \bar{S}_2$ and the latter claim is trivial. If $|T|$ is even, then $T$ is a normal subgroup of $S_2$ as $T$ is an elementary abelian $2$-group and it is normal in $H$. Moreover, since $|\bar{S}_2|=|S_2|/|T|$  we have that $$\frac{|\bar{H}|}{|\bar{S}_2|}=\frac{|H|/|T|}{|S_2|/|T|}=\frac{|H|}{|S_2|}$$ is odd and hence $\bar{S}_2$ is a Sylow $2$-subgroup of $\bar{H}$. 

Note that $\bar{S}_2$ is non-trivial, otherwise $T$ would be a Sylow $2$-subgroup and hence $\bar{H}$ would be a tame subgroup whose order cannot exceed $84(g(\cY/T)-1)$. 

From  \cite[Theorem 11.56]{hirschfeld-korchmaros-torres2008}, $\bar{o}$ and $\bar{\theta}$ are the only short orbits of $\bar{H}$. Therefore, the pair $\cY/T, \bar{H}$ satisfies Condition (IV). Since $\cY$ is taken as a minimal counterexample to Lemma  \ref{prop21nov2016}, $\bar H= \bar S_2 \rtimes \bar U$, where $\bar U$ is tame and cyclic. This implies that $\bar H$ fixes the point $\bar P$ of $\cY/T$ and hence that the points in one of the two non-tame short orbits of $H$, say $o$, lie under the points of a $T$-orbit implying $|o| \leq |T|$.

If $|T|$ is odd then $H=S_2 \rtimes U$ where $|U|$ is odd. The quotient curve $\cY/{S_2}$ is either rational or elliptic because otherwise the tame quotient group $H / S_2$ would satisfy $|H / S_2| > 84(g(\cY/{S_2})-1)$, a contradiction. 

If $\cY/{S_2}$ is rational then $U$ is cyclic from \cite[Theorem 1]{maddenevalentini1982}, a contradiction. Therefore $\cY/{S_2}$ is elliptic. Since $|o| \leq |T|$ the quotient group $H/S_2$ contains a subgroup of order at least $|U|/|T|$ fixing a point $P^\prime$ of $\cY/{S_2}$. From \cite[Theorem 11.94 (ii)]{hirschfeld-korchmaros-torres2008} we have that either $U=T$ or $|U|=3|T|$.

In the former case, as $U$ is abelian, from Lemma \ref{terribile} $U$ is cyclic, a contradiction. 

To deal with the latter case, note that the subgroup $\tilde{H}=\langle S_2, T \rangle = S_2 \times T$ is a subgroup of $H$ of index $3$. From Lemma \ref{terribile} $\tilde{H}$ fixes a point which is also a fixed point of $H$. Since this implies that $U$ is cyclic from \cite[Theorem 11.49]{hirschfeld-korchmaros-torres2008} we have a contradiction.

 Assume that $|T|$ is even. Since $U$ is isomorphic to $\bar U$ which is cyclic, we have a contradiction.

\textbf{Case 2: $g(\cY/T)=1$}. Then the Hurwitz genus formula applied to $\cY \rightarrow \cY/T$ gives
$$2(g(\cY)-1)\geq |o|(|T_P|-1)+ |\theta|(|T_R|-1)\geq \frac{|T|(|o|+|\theta|)}{2}$$
where $P\in o$ and $R\in \theta$. 

 Then
$H_P/T=H_PT/T$ is isomorphic to a subgroup of $\aut(\cY/T)$ fixing $\bar{P}$. From \cite[Theorem 11.94]{hirschfeld-korchmaros-torres2008}, $|H_PT/T|\leq 24$. Therefore, $|o|=|H|/|H_P|\geq \textstyle\frac{1}{24|T|}|H|$. The same holds for $R$. Therefore,
$2(g(\cY)-1)\geq \textstyle\frac{1}{24}|H|$ but this contradicts our assumption. %In particular, we have shown that $g(\cY/T)=0$. 

\textbf{Case 3: $g(\cY/T)=0$}. %We recall that $P\in o$ fixed by a Sylow $2$-subgroup $S_2$ of $H$. 
Assume that $S_2 \ne T$. Since $P$ is fixed by $S_2$, then $\bar{S}_2$ fixes $\bar{P}$.
%Assume that $S_2\neq T$. Then $\bar{S}_2=S_2T/T$ fixes the point $\bar{P}$ of $\cY/T$ lying under $P$ in the covering $\cY\rightarrow \cY/T$. 
Now take a point $R\in \theta$ fixed by a Sylow $2$-subgroup $S_2^*$ of $H$. Since $\theta$ is an $H$-orbit and $S_2$ is conjugate to $S_2^*$ in $H$, there exists a point $R_1\in \theta$ of $\cY$ fixed by $S_2$. Hence the point $\bar{R}_1$ of $\cY/T$ lying under $S$ in the covering $\cY\rightarrow \cY/T$ is also fixed by $\bar{S}_2$. Since $\bar{P}\neq \bar{S}$, $\bar{S}_2$ has two fixed points, but this is impossible since the order of $S_2$ is a power of the characteristic of $K$, see \cite[Theorem 11.14]{hirschfeld-korchmaros-torres2008}. 

Therefore, $S_2=T$ and hence from the Schur-Zassenhaus Theorem $H=S_2\rtimes U$ with a subgroup $U$ of odd order. Furthermore  $\bar{H}\cong U$ has odd order. From the classification of finite subgroups of $\PGL(2,K)$ for $\rm{char}(K)=2$, $\bar{H}$ is a cyclic group. This shows that $H=S_2\rtimes U$ with a cyclic subgroup of odd order, a contradiction.

Finally, from Lemma \ref{terribile} (i), we have that $\cY$ has even genus.
\end{proof}

We are now in a position to prove the main results of this section, see Theorem \ref{prop1} and Theorem \ref{prop2}. As anticipated, we analyze the case in which the odd core of $G$ is non-trivial and the case in which it is trivial separately.  First, some technical lemmas are needed. 

\begin{lemma} \label{exclaim1}
%Let $\cX$ be an ordinary curve of even genus $g \geq 2$ defined over an algebraically closed field $K$ of characteristic $2$. 
Let $G \leq \aut(\cX)$ be solvable with non-trivial odd core $O(G)$.  
  Let $\bar \cX=\cX/{O(G)}$, $\bar G=G / O(G)$ and $\bar g=g(\bar \cX)$. Then $\bar{g}$ is even. Moreover, if $\Gamma=\{P \in \cX \mid |O(G)_P|>1\}$, then $|\Gamma|$ is even.
\end{lemma}
\begin{proof}
From the Hurwitz genus formula applied to $\cX \rightarrow \cX/O(G)$,
$$g-1=|O(G)|(\bar g-1)+\frac{1}{2} \sum_{P \in \Gamma} (|O(G)_P|-1).$$
Let $D=\sum_{P \in \Gamma} (|O(G)_P|-1) \geq 0$ and $S$ be a Sylow $2$-subgroup of $G$. 
 
If $D \equiv 0 \mod 4$, then the claim follows as both $g-1$  and $|O(G)|$ are odd. Now, let $|\Gamma|$ be even. If $D \equiv 2 \mod 4$, then we have an odd number of points whose stabilizer in $O(G)$ has order congruent to $3$ modulo $4$. Then, by the normality of $O(G)$, there is at least a short orbit $\Omega$ of $O(G)$ fixed by $S$ as otherwise we get $D \equiv 0 \mod 4$. Hence, there exists a point $P \in \Omega$ such that $S_P=S$ as $|\Omega|$ is odd. Since both $O(G)_P$ and $S$ are normal in $G_P$, we have that $\langle S,O(G)_P \rangle=S \times O(G)_P \leq G_P$ is abelian and a contradiction is obtained from \cite[Lemma 11.75 (iv)]{hirschfeld-korchmaros-torres2008} and Theorem \ref{2i}. More in detail, let $\bar{G} = S \times O(G)_P
$. Then \cite[Lemma 11.75 (iv)]{hirschfeld-korchmaros-torres2008} would yield $\bar{G}_P^{(2)} = \bar{G}_P^{(1)}= S$, which is impossible by Theorem \ref{2i}.

Therefore, we assume that $|\Gamma|$ is odd. 

 From Lemma \ref{2syl}, $S$ is an elementary abelian $2$-group fixing at least a point on $\cX$. Since $O(G)$ is normal in $G$, $S$ acts on $\Gamma$ and hence there exists a point $P \in \Gamma$ such that $S_P=S$ as $|\Gamma|$ is odd. Since both $O(G)_P$ and $S$ are normal in $G_P$ we have that $\langle S,O(G)_P \rangle=S \times O(G)_P \leq G_P$ is abelian and a contradiction is obtained as above. %from \cite[Lemma 11.75 (iv)]{hirschfeld-korchmaros-torres2008} and Theorem \ref{2i}.
\end{proof}

\begin{lemma}\label{exclaim2}
 %Let $\cX$ be an ordinary curve defined over an algebraically closed field of characteristic $2$.
Let $G\leq\aut(\cX)$ be of even order such that $O(G)$ is non-trivial and let $P\in\cX$. If $P$ is fixed by a $2$-subgroup $S$ of $G$, then $O(G)_P$ is trivial. 
\end{lemma}
\begin{proof}
By contradiction, assume that $O(G)_P$ is non-trivial. Then by \cite[Lemma 11.44 (ii) (c)]{hirschfeld-korchmaros-torres2008}, $O(G)_P$ is cyclic,  say $O(G)_P = \langle \alpha \rangle$. Then $S$ and $\alpha$ commute since  $O(G)_P=O(G) \cap G_P$ is a normal subgroup of $G_P$ while $S$ in normal in $G_P$  from \cite[Lemma 11.44 (ii) (e)]{hirschfeld-korchmaros-torres2008}. From \cite[Lemma 11.75 (iv)]{hirschfeld-korchmaros-torres2008}, this is a contradiction to Theorem \ref{2i}, whence our claim is proved.
\end{proof}

\begin{theorem} \label{prop1}
%Let $\cX$ be an ordinary curve of even genus $g \geq 2$ defined over an algebraically closed field $K$ of characteristic $2$. Let $G \leq \aut(\cX)$ be solvable. 
If the odd core $O(G)$ of $G$ is non-trivial then $|G| < 35(g +1)^{3/2}$. 
\end{theorem}

\begin{proof}

From Remark \ref{remtame} we can assume that $|G| > 84(g-1)$ so that $|G|$ is even.

The result is proved by induction on $g$. 

If $g=2$ then from \cite[Proposition 11.99 (ii)]{hirschfeld-korchmaros-torres2008}, $|G| \leq 48<35(g+1)^{3/2}$ and the claim follows. 

By induction hypothesis we assume that the claim holds for every ordinary curve $\cX^{\prime}$ of even genus $2 \leq g^{\prime} < g$, so that if $G^\prime$ is a solvable automorphism group of $\cX^\prime$ with $O(G^\prime) \ne \{1\}$, then $|G^\prime| \leq 35(g^\prime+1)^{3/2}$.

 Let $\bar \cX=\cX/{O(G)}$, $\bar G=G / O(G)$ and $\bar g=g(\bar \cX)$. 
From Lemma \ref{exclaim1} we conclude that either $\bar g \geq 2$ is even, or $\bar g=0$. We will analyze these two cases separately.

\textbf{Case 1:} $\bar{g} = 0$. In this case the covering $\cX \rightarrow \bar \cX$ ramifies. 
As $\bar G$ is isomorphic to a solvable subgroup of $\PGL(2,K)$ of even order, from \cite[Theorem 1]{maddenevalentini1982} $\bar G=E_{q} \rtimes C$, where $E_{q}$ is an elementary abelian $2$-group of order $q=2^h$ and $C$ is a cyclic group of odd order $c$.  Moreover, $\bar G$ fixes a point $\bar{P}_0$ of $\bar \cX $ and $E_{q}$ acts semiregularly on $ \cX \setminus \{\bar{P}_0\}$, while $C$ fixes also another point $\bar{P}_\infty$ acting semiregularly on $\bar{\cX } \setminus \{\bar{P}_0, \bar{P}_\infty\}$.

This implies that the $\bar G$-orbit of $\bar{P}_\infty$ has length $q$. Since $O(G)$ has odd order, the group $E_{q}$ is the image under reduction of a Sylow $2$-subgroup $S$ of $\aut(\cX)$, that is, $SO(G)/O(G) \cong E_{q}$. 

Denote by $O$ the $O(G)$-orbit lying over $\bar{P}_0$ in $\cX$. %We prove that $O$ is a long orbit for $O(G)$. To do that the following claim is firstly proved.

Using Lemma \ref{exclaim2} we are able to prove that $O$ is a long orbit for $O(G)$. 
In fact, as $S$ acts on $O$ and $|O|$ is odd, there exists a point $P \in O$ such that $S_P=S$. If $O$ is a short orbit of $O(G)$, then $O(G)_P$ is non-trivial and a contradiction to Lemma \ref{exclaim2} is obtained.

Thus, for $P \in O$ we have $G_P=S \rtimes U$, where $|U|=c$. Denote by $O_1,\dots,O_q$ the $O(G)$-orbits lying over the $q$ distinct points in $\bar \cX$ of the $\bar G$-orbit of $\bar{P}_\infty$, and let $\ell=|O_1|=\dots=|O_q|$. 

Assume that there exists a short orbit $\Sigma$ of $O(G)$ such that $\Sigma \ne O_i$ for any $i=1,\dots,q$ and let $T \in \Sigma$. In particular, in terms of the ramification of the cover $\cX\rightarrow \cX/O(G)$, $T$ does not lie over $\bar{P}_{\infty}$. From the Hurwitz genus formula  
$$2g-2\geq -2|O(G)|+qc \frac{|O(G)|}{|O(G)_T|}(|O(G)_T|-1),$$
as $\bar G$ acts on $\bar\cX \setminus \{\bar{P}_0, \bar{P}_\infty\}$. 

Hence,
$$2g-2 \geq |O(G)| (-2 +qc/2) \geq \frac{|O(G)|qc}{4}=\frac{|G|}{4},$$
a contradiction to $|G|>84(g-1)$.

Since $\cX \rightarrow \bar \cX$ ramifies we can suppose that $O_1,\dots,O_q$ are short orbits for $O(G)$. 

Write $|G|=|O(G)|qc$ and let $\mathcal{O}=O_1 \cup,\dots,\cup  \:O_q$. Clearly, $\mathcal{O}$ is a short orbit of $G$. Moreover, for every point $R \in O_1$,
$$|\mathcal{O}|=q|O_1|=\frac{q|O(G)|}{|O(G)_R|}, \quad {\rm and} \quad |G|=|G_R||\mathcal{O}|,$$
and hence $G_R$ is a cyclic tame group with $|G_R|=|O(G)_R|c$. From the Hurwitz genus formula
$$2g-2=-2|O(G)|+q|O_1|(|O(G)_R|-1) \geq -2|O(G)|+\frac{q|O_1||O(G)_R|}{2}$$
$$=|O(G)|(q/2-2)=|O(G)_R||O_1|(q/2-2) \geq \frac{|O(G)_R|q|O_1|}{5}=\frac{|G|}{5c}.$$
Thus, 
\begin{equation}
|G| \leq 10c(g-1)
\end{equation}
and since $|G|=|O(G)|qc$,
\begin{equation} \label{res1}
|O(G)|q \leq 10(g-1).
\end{equation}

From \cite[Proposition 1]{nakajima 1987}, $c \leq q-1$ and hence $c|O(G)| \leq 10(g-1)$ holds. 
The fact that $c \leq q-1$ can be seen also recalling that $C$ acts semi-regularly on $\Sigma$ where $\Sigma$ denotes the $\bar G$-orbit of $\bar P_0$ in $\cX/O(G)$ and $|\Sigma|=q$.  

If $qc \leq 12(g-1)$, then $c \leq 2\sqrt{3}\sqrt{g-1} +1$  as $c \leq q-1$. Combining with Equation (\ref{res1})
$$|G| \leq (2\sqrt{3}\sqrt{g-1}+1) \cdot  10(g-1)<35(g+1)^{3/2},$$
completing the proof of the Theorem.
%and the claim follows. 

Thus we can assume that $qc>12(g-1)$.  From Lemma \ref{terribile} (i), $S$ has just another short orbit $\Omega$ of length $q/\ell=2^r>1$ with $r<h$ other than the fixed point $P$; moreover $S$ and $G$ have the same short orbits $\{P\}$ and $\Omega$.

From Equation (\ref{res1})
$$c|O(G)|<q|O(G)|\le 10(g-1)<12(g-1)<qc,$$
and $|O(G)|<q$. 

Since this implies that $O$ cannot contain a long orbit of $S$
$$|O(G)|=1+|\Omega|=1+q / \ell=1+2^r,$$
and $G / \Ker\varphi$ acts sharply $2$-transitively 
 on $O$, where $\Ker\varphi$ denotes the Kernel of the permutation representation of $G$ on $O$. 

From \cite{Huppert} and \cite[Theorem 5.4.4]{bw}, as $G/ \Ker\varphi$ is a solvable $2$-transitive group, $1+2^r=|O|=p^t$ for some prime $p$ and $t \geq 1$ and so either $p=3$, $t=2$ and $k=3$ or $t=1$. 

If $|O(G)|=9$, then by  \cite[Theorem 20.7.1]{Marsh} the Sylow $2$-subgroup of $G / \Ker\varphi$ has order $8$ and must be isomorphic either to the quaternion group $Q_8$ or to a cyclic group, a contradiction to Lemma \ref{2syl}. 

Assume that $|O(G)|=p$. From the Deuring-Shafarevich formula applied to $S$, $g-1=\gamma-1=q-p$, and so $q= g+|O(G)|-1$. Combining with Equation (\ref{res1}),
$$(g-1)|O(G)|<(g+|O(G)|-1)|O(G)|=q|O(G)| \leq 10(g-1).$$

Since this implies that $1+q/ \ell=1+2^k=p=|O(G)|<8$ we have either $p=3$ or $p=5$. 

If $p=3$, $U$ and a subgroup $S^\prime$ of $S$ of index $2$ fix $O$ pointwise. Let $\cX^\prime=\cX/{S^\prime}$ and $g^\prime=g(\cX^\prime)$. From the Hurwitz genus formula applied to $\cX \rightarrow \cX^\prime$
$$2(q-3)=2(g-1)=2\frac{q}{2}(g^\prime-1)+2\cdot 3\bigg(\frac{q}{2}-1\bigg),$$
and hence $g^\prime=0$. The quotient group $S^\prime U / S^\prime \cong U$ is isomorphic to a subgroup of $\PGL(2,K)$ fixing three points, a contradiction to \cite[Theorem 1]{maddenevalentini1982}. 

Let $p=5$. If $q \geq 16$ then a Sylow $2$-subgroup of $G / \Ker\varphi$ has order at least $4$ and it must be cyclic being isomorphic to the multiplicative group of a field $\mathbb{F}$, a contradiction  to Lemma \ref{2syl}. The cases $q=4$ or $q=2$ are impossible as $\Omega$ is a short orbit of $S$. Assume that $q=8$. In this case either $c=1$ or $c=7$ from \cite[Proposition 1]{nakajima 1987} and $g=4$. If $c=1$ then $|G|=40<35(g+1)^{3/2}$, a contradiction. If $c=7$ then 
$|G|=280$.  Moreover, $U$ and a subgroup $S^\prime$ of $S$ of order $2$ fixes $O$ pointwise. Let $\cX^\prime=\cX/{S^\prime}$ and $g^\prime=g(\cX^\prime)$. From the Hurwitz genus formula applied to $\cX \rightarrow \cX^\prime$
$$2(q-p)=6=2(g-1)=2\cdot 2(g^\prime-1)+2\cdot 5 \bigg(2-1 \bigg)=4(g^\prime-1)+10,$$
and hence $g^\prime=0$. The quotient group $S^\prime U / S^\prime \cong U$ is isomorphic to a non-trivial subgroup of $\PGL(2,K)$ fixing $5$ points, a contradiction to \cite[Theorem 1]{maddenevalentini1982}. 

\textbf{Case 2:} $\bar g \geq 2$. By Lemma \ref{exclaim1} we have that $\bar g<g$ is even. The quotient group $\bar G$ is a solvable subgroup of $\aut(\bar \cX)$. We now prove that $\cX/O(G)$ is ordinary so that the induction hypothesis holds for $\bar G$ and hence $|\bar G| \leq 35(\bar g+1)^{3/2}$. To do that we first prove the following claims.

\textbf{Claim 1:} $\bar G_{\bar Q}^{(2)}$ is trivial for any $\bar Q\in \cX/{O(G)}$.

\begin{proof}[Proof of Claim 1]
Let $\bar Q\in {\cX/{O(G)}}$ and $\bar T$ be the Sylow $2$-subgroup of the stabilizer $\bar G_{\bar Q}=(G / O(G))_{\bar Q}$. Since $|O(G)|$ is odd, the automorphism group $\bar T$ is induced by some $2$-subgroup $T$ of $\aut(\cX)$ and $T \cong \bar T$. Since $\bar{T}$ is the Sylow 2-subgroup of $\bar{G}_Q$, the group $T$ acts on the $O(G)$-orbit $\Sigma$ which lies over the point $\bar{Q}$. Hence, recalling that $|\Sigma|$ is a divisor of $|O(G)| $ which is odd, $T$ fixes at least a point $Q \in \Sigma$.  %Moreover $T$ acts on the $O(G)$-orbit $\Sigma$ which lies over the point $\bar Q$. 
%As $T$ is the Sylow $2$-subgroup of $G_Q$, then $|\Sigma|$ is odd, whence $T$ fixes at least a point $Q \in \Sigma$. 
From Lemma \ref{exclaim2}, $\Sigma$ is a long orbit of $O(G)$. Therefore each local parameter $t \in K(\cX/{O(G)})$ at $\bar Q$, is also a local parameter at $Q$ in $K(\cX)$. 
Let $\bar\alpha\in\bar T$  and $\alpha\in T$ which induces $\bar\alpha$. Then
$$ v_Q(\alpha(t)-t) = v_Q(\bar\alpha(t)-t) = v_{\bar Q}(\bar\alpha(t)-t).  $$
Hence, $G_Q^{(i)} = \bar{G}_{\bar Q}^{(i)}$ for any $i\geq1$. In particular, $\bar G_{\bar Q}^{(2)}$ is trivial.
\end{proof} 

\textbf{Claim 2:} $\cX/O(G)$ is ordinary.

\begin{proof}[Proof of Claim 2]
Recall that $N_1=O(G)$ is the largest normal subgroup of $G$ whose order is prime to $2$. Also, by our hypothesis, the quotient curve $\cX_1=\cX/N_1$ is neither rational, nor elliptic, $g(\cX_1)=\bar g \geq 2$ is even and  $G_1=G/N_1$ is a solvable automorphism group of $\cX_1$ whose order satisfies $|G_1| > 84(g(\cX_1)-1)$.

Hence, $G_1$ has a minimal normal $d$-subgroup where $d$ must be equal to $2$ by the choice of $N_1$ to be the largest normal, prime to $2$ subgroup of $G$. 

Take the largest normal $2$-subgroup $N_2$ of $G_1$. We note that $N_2\neq G_1$. 

In fact, if $N_2=G_1$ then $G_1$ is a $2$-group of order bigger than $84(\bar g+1)>4 (\bar g-1)$. From \cite[Theorem 1]{nakajima 1987}, $\cX_1$ has zero $2$-rank, and hence
$G_1$ fixes a point $P_1\in \cX_1$, see \cite[Lemma 11.129]{hirschfeld-korchmaros-torres2008}. On the other hand, since ${G_1}_{P_1}^{(2)}$ is trivial from Claim 2,  \cite[Result 2.6]{montanucci- korchmaros} shows $|G_1|\le 2(\bar g-1)$, a contradiction.

Now, define $\cX_2$ to be the quotient curve $\cX_1/N_2$. Since the second ramification group of $N_1$ at any point of $\cX_1$ is trivial, \cite[Result 2.6 (i)]{montanucci- korchmaros} gives  $\bar g-\gamma(\cX_1)=|N_2|(g(\cX_2)-\gamma(\cX_2))$. In particular, if $\cX_2$ is ordinary or rational then $\cX_1$ is an ordinary curve.

We prove that the case $g(\cX_2)=1$ cannot occur. 

In fact, suppose that $g(\cX_2)=1$. In this case, $\cX_1\rightarrow\cX_1/N_2$ ramifies otherwise $\cX_1$ itself would be elliptic. Let $\Delta_1$ be a short orbit of $N_2$ and let $\Gamma$ be the short orbit of $G_1$ containing $\Delta_1$. Thus $\Gamma= \Delta_1 \cup \ldots \cup \Delta_k$ where $\Delta_i$ is a short orbit of $N_1$ with $|\Delta_i|=|\Delta_1|$ for every $i=2,\dots,k$.

 For a point $P \in \Delta_1$, $|{G_1}_P|=|G|/k|\Delta_1|$ and the quotient group ${G_1}_P N_2/N_2$ fixes the point $\bar P$ of $\cX_1/N_2$ which lies under the orbit $\Delta_1$. From \cite[Theorem 11.94]{hirschfeld-korchmaros-torres2008},
$$|{G_1}_P N_2 / N_2|=|{G_1}_P|/|{N_1}_P| \leq 24.$$
Moreover, from the Hurwitz genus formula applied to $\cX_1 \rightarrow \cX_1/N_2$,
$$2\bar g-2 \geq 2k|\Delta_1|(|{N_2}_P|-1) \geq \frac{2k|\Delta_1| |{N_2}_P|}{2} \geq \frac{k|\Delta_1| |{G_1}_P|}{24}=\frac{|{G_1}|}{24}.$$
 Since this implies that $|G_1| \leq 84(\bar g-1)$, we have a contradiction.

Therefore, $g(\cX_2)\geq 2$ with $g(\cX_2)>\gamma(\cX_2)$ may be assumed. The factor group $G_2=G_1/N_2$ is an automorphism group of the quotient curve $\cX_2=\cX_1/N_2$, and it has a minimal normal $d$-subgroup with $d\neq 2$, by the choice of $N_2$. 

Define $N_3$ to be the largest normal, prime-to-$2$ subgroup of $G_2$. Observe that $N_3$ must be a proper subgroup of $G_2$, otherwise $G_2$ itself would be a prime-to-$2$ subgroup of $\aut(\cX_2)$ of order bigger than $84(g(\cX_2)-1)$. Therefore, there exists a (maximal) non-trivial normal $2$-subgroup $N_4$ in the factor group $G_3=G_2/N_3$.

Now, the above argument remains valid whenever $G,N_1,G_1,N_2,\cX_1,\cX_2$ are replaced by $G_2,N_3,G_3,N_4,\cX_3,\cX_4$ where the quotient curves are $\cX_3=\cX_2/(G_2/N_3)$ and
$\cX_4= \cX_3/(G_3/N_4)$. In particular, we can suppose that $g(\cX_4)\neq 1$ and $g(\cX_3)-\gamma(\cX_3)=|N_4|(g(\cX_4)-\gamma(\cX_4))$.  

Repeating the above argument, a finite sharply decreasing sequence $g(\cX_1)>g(\cX_2)>g(\cX_3)>g(\cX_4) >\ldots$ arises. If this sequence has $n+1$ members then $g(\cX_n)-\gamma(\cX_n)=|N_{n+1}|(g(\cX_{n+1})-\gamma(\cX_{n+1}))$ with $g(\cX_{n+1})=\gamma(\cX_{n+1})=0$. Therefore, for some index $m\leq n$, the curve $\cX_m$ would not be ordinary, but the successive member $\cX_{m+1}$ would be an ordinary curve. Since $\cX_{m+1}$ is a quotient curve of $\cX_m$ with respect to a $2$-subgroup, this is impossible by \cite[Result 2.5 (v)]{montanucci- korchmaros}.
\end{proof}

From Claim 2, since $\cX/O(G)$ is an ordinary curve of even genus $\bar g<g$ with $\bar g \geq 2$ and $\bar G$ is a solvable group of automorphisms for $\cX/O(G)$, we have that $|\bar G| \leq 35(\bar g+1)^{3/2}$. 
If $|G| > 35(g+1)^{3/5}$, then the Hurwitz genus formula applied to $\cX \rightarrow \cX/O(G)$ implies that $2g-2 \geq |O(G)|(2\bar g-2)$. Thus, $|\bar G| \geq 35(\bar g + 1)^{3/2}$, a contradiction. Now, Theorem \ref{prop1} follows.

\end{proof}

We now deal with the case where $O(G)$ is trivial. Since this implies that $\Aut(\cX)$ has a minimal normal $2$-subgroup, we can consider the largest normal $2$-subgroup $Q$ of $\Aut(\cX)$, which is non-trivial. Proposition \ref{prop20nov2016} below will be used in the proof of Theorem \ref{prop2} to ensure that $g(\cX/Q)$ is even when $\cX/Q$ is neither rational nor elliptic.
\begin{proposition}\label{prop20nov2016}
%Let $\cX$ be an ordinary curve defined over an algebraically closed field of characteristic $2$
% such that $g(\cX)\geq 2$ is even. % Assume that the following hold.
%\begin{itemize}
%\item[(i)] $\cX$ is ordinary.
%\item[(i)] $g(\cX)\geq 2$ is even.
% \end{itemize}
Let $G$  be a subgroup of $\aut(\cX)$ satisfying the following conditions.
\begin{itemize}
\item[(i)] $G$ has a non-trivial normal $2$-subgroup $Q$, and $Q$ is not a Sylow $2$-subgroup of $G$.
\item[(ii)] $\cX/Q$ is neither rational nor elliptic.
\item[(iii)]  $|G|> 84(g(\cX)-1)$.
\end{itemize}
%\begin{proposition}
Then  $\bar g=g(\cX/Q)$ is even.
\end{proposition}
\begin{proof} We show that the subgroup $\bar{G}=G/Q$ of $\aut(\cX/Q)$ satisfies the conditions (I), (II) and (III) of Lemma \ref{prop19nov2016}.

From Lemma \ref{2syl}, we have that a Sylow $2$-subgroup $S_2$ of $G$ has a fixed point. Since $Q$ is a normal subgroup of $G$, $S_2$ contains $Q$. Note that the factor group $S_2/Q$ is non-trivial as $Q$ is properly contained in $S_2$ by (i). Also, $S_2/Q$ is a Sylow 2-subgroup of $\bar{G}$ and, seen as a subgroup of $\aut(\cX/Q)$ it fixes a points on $\cX$. %Therefore, the factor group $S_2/Q$, viewed as a subgroup of $\aut(\cX/Q)$, is a Sylow $2$-subgroup and fixes a point of $\cX/Q$. 
This shows that Condition (I) is satisfied. 

Since $\cX$ is ordinary, so is $\cX/Q$, as $Q$ is a $2$-subgroup of $G$. 
In fact, let $\ell_1,\ldots,\ell_k$ be the short orbits of $Q$ on $\cX$. Fix a point $P_i \in \ell_i$ for $i=1,\ldots,k$. %Since $\cX$ is ordinary, %from Theorem \ref{2i} we have that $Q_{P_i}^{(0)}=Q_{P_i}^{(1)}=Q_{P_i}$ and $Q_{P_i}^{(2)}=\{1\}$ for every $i=1,\ldots,k$. The Hurwitz genus formula applied to $\cX \rightarrow \cX/Q$ yields $$2(g(\cX)-1)=2|Q|(g(\cX/Q)-1)+2\sum_{i=1}^{k}|\ell_i|\bigg(\frac{|Q|}{|\ell_i|}-1\bigg),$$ while the Deuring-Shafarevic formula applied to $Q$ gives 
Then  Corollary \ref{2ii} yields $$2(g(\cX)-1)=2|Q|(g(\cX/Q)-1)+2\sum_{i=1}^{k}|\ell_i|\bigg(\frac{|Q|}{|\ell_i|}-1\bigg),$$
 while the Deuring-Shafarevic formula applied to $\cX\rightarrow \cX/Q$ gives
$$g(\cX)-1=\gamma(\cX)-1=|Q|(\gamma(\cX/Q)-1)+\sum_{i=1}^{k}|\ell_i|\bigg(\frac{|Q|}{|\ell_i|}-1\bigg),$$ implying that $g(\cX/Q)=\gamma(\cX/Q)$.

Therefore, Condition (III) is satisfied.

It remains to investigate Condition (II).
By (ii) and (iii), $G$ has  either one or two non-tame short orbits and in the latter case $G$ has no tame short orbits; see \cite[Theorem 11.56]{hirschfeld-korchmaros-torres2008}. 

Let $o$ be a non-tame short orbit %consisting of fixed points 
whose points are fixed by Sylow $2$-subgroups of $G$. Choose one Sylow $2$-subgroup, say $S_2$, and consider the set $\rho$ of all fixed points of $S_2$ in $o$. 

The normalizer $N=N_G(S_2)$ leaves $\rho$ invariant. Actually, $N$ is transitive on $\rho$. In fact, if $P,R\in \rho$ are fixed by $S_2$, then there exists $g\in G$ that takes $P$ to $R$. Then the Sylow $2$-subgroup $gS_2g^{-1}$ of $G$ fixes $R$. Therefore, $R$ is fixed by both $S_2$ and $gS_2g^{-1}$. Hence $S_2=gS_2g^{-1}$ as $G_R$ has a unique maximal $2$-subgroup. Thus $g\in N$. This shows that $|\rho|=|N|/|N_P|$ is odd. 

By Lemma \ref{2syl}, (i) and (ii) yield that $S_2$ has an odd number of fixed points. This rules out  the possibility that $G$ has two non-tame short  orbits both consisting of fixed points of Sylow $2$-subgroups of $G$. In fact, since $|\rho|$ is odd, if both the non-tame short orbits contained fixed points from the same Sylow $2$-subgroup then $S_2$ would have an even number of fixed points. Otherwise, there would exist two Sylow $2$-subgroups of $G$ whose fixed points are in different (non-tame short) orbits of $G$, respectively. But this is not possible, as all the Sylow $2$-subgroups of $G$ are pairwise conjugate.

 Therefore, just one $G$-orbit, say $o$, comprises the fixed points of the  Sylow $2$-subgroups of $G$. 

Assume that $G$ has another non-tame short orbit, say $\theta$. Since $Q$ is a normal subgroup of $G$, $\theta$ is partitioned in $Q$-orbits with the same length $t$. 
Obviously, $t>1$ if and only if $Q$ does not fix any point in $\theta$. We show that this actually the case.

By contradiction, $t=1$. Take a point $P$ from $\theta$. Since $Q$ is contained in all Sylow $2$-subgroups, the Hurwitz genus formula applied to $\cX \rightarrow \cX/Q$ reads
\begin{equation}
\label{eq220nov2016}
2(g(\cX)-1)=2|Q|(g(\cX/Q)-1)+2|o|(|Q|-1)+2|\theta|(|Q|-1).
\end{equation}
whence $\theta$ is even.

 Let $\Delta \ne \{P\}$ be a $Q$-orbit contained in $\theta$ and let $\bar\theta$ be the set of the points of $\cX/Q$ lying under those in $\theta$. Now, we are in a position to prove that the factor group $\bar{G}=G/Q$ viewed as a subgroup of $\aut(\cX/Q)$ satisfies Condition (II). The Sylow $2$-subgroups of $\bar{G}$ are the factor groups $S_2/Q$ where $S_2$ ranges over the Sylow $2$-subgroups of $G$. Furthermore, the covering $\cX\rightarrow \cX/Q$ is totally ramified at the points lying under the points in $o$. Those points of $\cX/Q$ form a set $\bar{o}$ with $|\bar{o}|=|o|$, and $\bar{o}$ is a set of fixed points of Sylow $2$-subgroups of $\bar{G}$. 

Now either $S_2$ preserves $\Delta$ or not. In the former case, $\bar o$ and $\bar \theta$ are exactly the non-tame short orbits of the quotient group $\bar G$ and they are both consisting of fixed points of Sylow $2$-subgroups of $\bar G$. Thus, the claim follows from Lemma \ref{prop21nov2016}. In the latter case, $S_2$ has no fixed points on $\theta$ and thus $\bar G$ satisfies Condition (II). Now the claim follows from Lemma \ref{prop19nov2016}.
\end{proof}

\begin{theorem} \label{prop2}
%Let $\cX$ be an ordinary curve of even genus $g \geq 2$ defined over an algebraically closed field $K$ of characteristic $2$. Let $G \leq \aut(\cX)$ be solvable. 

If the odd core $O(G)$ of $G$ is trivial then $|G| < 35(g+1)^{3/2}$. 
\end{theorem}

\begin{proof} 
From Remark \ref{remtame} we can suppose that $|G|>84(g-1)$. 
The result is proved by induction on $g$. 

If $g=2$ then from \cite[Proposition 11.99 (ii)]{hirschfeld-korchmaros-torres2008}, $|G| \leq 48<35(g+1)^{3/2}$ and the claim follows. 

By the induction hypothesis we assume that the claim holds for every ordinary curve $\cX^\prime$ of even genus $2 \leq g^\prime<g$, so that if $G^\prime$ is an odd core-free solvable automorphism group of $\cX^\prime$ then $|G^\prime| < 35(g^\prime+1)^{3/2}$. 

Since $G$ is solvable and $O(G)$ is trivial, then $G$ admits a normal $2$-subgroup. Let $Q$ be the largest normal $2$-subgroup of $G$, $\bar g= g(\cX/Q)$ and $\bar G=G/Q$. 

Three cases are distinguished according as $\cX/Q$ is rational, elliptic, or $\bar g \geq 2$.

\textbf{Case 1:} $\bar g=0$. The quotient group $\bar G$ has no non trivial normal $2$-subgroup and it is isomorphic to a subgroup of $\PGL(2,K)$ with $\rm{char}(K)=2$. From \cite[Theorem 1]{maddenevalentini1982}, either $\bar G$ is a tame cyclic group or $\bar G$ is isomorphic to the alternating group $\rm{Alt}_4$ or $\bar G$ is isomorphic to the symmetric group $\rm{Sym}_4$. 

In the last two cases from \cite[Theorem 1(i)]{nakajima 1987},
$$|G| \leq 24|Q| \leq 96(g-1).$$
Since $g \geq 2$ we have that $96(g-1) < 35(g+1)^{3/2}$ and the claim follows. Therefore $\bar G$ is a tame cyclic group and hence $Q \in \rm{Syl}_2(G)$. 

From the Schur-Zassenhaus Theorem $G=Q \rtimes U$ where $U \cong \bar G$ is tame and cyclic. 

Since $|G|>12(g-1)$, from Lemma \ref{terribile} $Q$ has a fixed point $P$ on $\cX$ and exactly another short orbit $\Omega$ which is non-trivial. Moreover $G=G_P$ and $g=|Q|-|\Omega|$. 

Let $V$ be the subgroup of $Q$ fixing a point of $\Omega$. Then $V$ fixes $\Omega$ pointwise and since $U$ normalizes $Q$, then $U$ normalizes $V$. Therefore the representation of the action of $U$ on $Q$ by conjugation is completely reducible from Maschke's Theorem; see \cite[Theorem 6.1]{machi}. This means that $Q=V \times M$, where $M$ acts sharply transitively on $\Omega$. 

Let $g^\prime$ be the genus of the quotient curve $\cX/M$. 
Since the quotient group $G/M$ has even order and fixes two points in $\cX/M$, then $g^\prime \ne 0$ from \cite[Theorem 1]{maddenevalentini1982}. 

Assume that $g^\prime=1$. From the Hurwitz genus formula
$$2(|Q|-|M|-1)=2(g-1)=2(|M|-1),$$
and hence
$$|Q|=2|M|.$$
Furthermore, $|G/M|=2|U|$ and $G/M$ fixes two points in $\cX/M$. From \cite[Theorem 11.94]{hirschfeld-korchmaros-torres2008}, $|U|=3$ and hence from \cite[Theorem 1]{nakajima 1987}
$$|G|=|Q||U| \leq 12(g-1).$$
Thus, this case cannot occur as $|G|>84(g-1)$.

Assume that $g^\prime \geq 2$. From the Hurwitz genus formula
$$2(|Q|-|M|-1)=2(g-1)=2|M|(g^\prime-1)+2(|M|-1),$$
and thus
$$g^\prime=|Q|/|M| -1=|V|-1.$$
The quotient group $G/M \cong V \rtimes U$ has two fixed points of $\cX/M$, and $(\cX/M)/V \cong \cX/Q$ is rational. From the proof of Theorem 3 in \cite{nakajima 1987} (see Equation (4.16) and page 606), $|U| \leq \sqrt{|V|}+1=\sqrt{g^\prime+1}+1=\sqrt{g/|M|+1}+1$. Thus, 
$$|G|=|Q||U| \leq (g+|M|) (\sqrt{g/|M|+1}+1) \leq (g+|M|)(2\sqrt{2}\sqrt{g/ |M|}).$$
From \cite[Theorem 11.78 (i)]{hirschfeld-korchmaros-torres2008} $|M| \leq g$ and
$$|G| \leq 2\sqrt{2} (g+1)\sqrt{g} \leq 2\sqrt{2}(g+1)^{3/2}<35(g+1)^{3/2},$$
proving the claim.

\textbf{Case 2:} $\bar g=1$. In this case $\cX\rightarrow\cX/Q$ ramifies, otherwise $\cX$ itself would be elliptic. Let $\Delta_1$ be a short orbit of $Q$ and let $\Gamma$ be the short orbit of $G$ containing $\Delta_1$. Thus $\Gamma= \Delta_1 \cup \ldots \cup \Delta_k$ where $\Delta_i$ is a short orbit of $Q$ with $|\Delta_i|=|\Delta_1|$ for every $i=2,\dots,k$. For a point $P \in \Delta_1$, $|G_P|=|G|/k|\Delta_1|$ and the quotient group $G_P Q/Q$ fixes the point $\bar P$ of $\cX/Q$ which lies under the orbit $\Delta_1$. From \cite[Theorem 11.94]{hirschfeld-korchmaros-torres2008}
$$|G_P Q / Q|=|G_P|/|Q_P| \leq 24.$$
Moreover, from the Hurwitz genus formula applied to $\cX \rightarrow \cX/Q$,
$$2g-2 \geq 2k|\Delta_1|(|Q_P|-1) \geq \frac{2k|\Delta_1| |Q_P|}{2} \geq \frac{k|\Delta_1| |G_P|}{24}=\frac{|G|}{24}.$$
Since this implies that $|G| \leq 84(g-1)$, this case cannot occur.

\textbf{Case 3:} $\bar g \geq 2$. From the Hurwitz genus formula, we get $|G/Q| >84(\bar g-1)$ as $|G|>84(g-1)$. 

If $Q \not\in \rm{Syl}_2(G)$ then $\bar g$ is even from Proposition \ref{prop20nov2016}. Moreover, the case $Q \in \rm{Syl}_2(G)$ cannot occur since $G/Q$ would be tame and hence $|G/Q| \leq 84(\bar g-1)$, a contradiction.

Thus, we conclude that $\bar g \geq 2$ is even and $\bar g <g$. The curve $\cX/Q$ is ordinary from \cite[Result 2.5]{montanucci- korchmaros}. Hence, the induction hypothesis holds for $\cX/Q$ implying $|G/Q| \leq 35(\bar g+1)^{3/2}$. 
If $|G| > 35(g+1)^{3/5}$, then the Hurwitz genus formula applied to $\cX \rightarrow \cX/Q$ implies that $2g-2 \geq |Q|(2\bar g-2)$. Thus, $|\bar G| \geq 35(\bar g + 1)^{3/2}$, a contradiction. Now the claim follows.
\end{proof}

Combining Theorem \ref{prop1} and Theorem \ref{prop2}, we get the main result of this paper.
\begin{corollary}\label{mariaportamivia}
Let $\cX$ be an ordinary curve of even genus $g \geq 2$ defined over an algebraically closed field $K$ of characteristic $2$. Let $G \leq \aut(\cX)$ be a solvable automorphism group of $\cX$; then $|G| < 35(g+1)^{3/2}$. 
\end{corollary}

\subsection*{Acknowledgments}
The authors wish to thank the anonymous referee for his/her thoughtful reading and useful comments, which greatly improved the exposition and clarity of our paper.

 This research was performed within the activities of  GNSAGA - Gruppo Nazionale per le Strutture Algebriche, Geometriche e le loro Applicazioni of Italian INdAM.
 
 The second author was supported by FAPESP-Brazil, grant 2017/18776-6.

\end{document}